\def\square{\pst@object{square}}
\def\square@i(#1,#2)#3{{\use@par\solid@star\psframe[origin={#1,#2}](#3,#3)}}
\DeclareFontFamily{U}{tipa}{}
\DeclareFontShape{U}{tipa}{bx}{n}{<->tipabx10}{}
\newcommand{\arc@char}{{\usefont{U}{tipa}{bx}{n}\symbol{62}}}%
\newcommand{\arc}[1]{\mathpalette\arc@arc{#1}}
\newcommand{\arc@arc}[2]{%
	\sbox0{$\m@th#1#2$}%
	\vbox{
		\hbox{\resizebox{\wd0}{\height}{\arc@char}}
		\nointerlineskip
		\box0
	}%
}
\newcommand{\doublewedge}{\big@doubleop{\wedge}}
\newcommand{\big@doubleop}[1]{%
	\DOTSB\mathop{\mathpalette\big@doubleop@aux{#1}}\slimits@
}
\newcommand\big@doubleop@aux[2]{%
	\sbox\z@{$\m@th#1#2$}%
	\makebox[1.35\wd\z@][s]{$\m@th#1#2\hss#2$}%
}
\newcommand{\dnear}{\delta_{\Phi}} 
\theoremstyle{plain}
\newtheorem{theorem}{Theorem}
\newtheorem{lemma}{Lemma}
\newtheorem{definition}{Definition}
\newtheorem{corollary}{Corollary}
\newtheorem{proposition}{Proposition}
\begin{document}
	\title{ On approximately ideals: primary and 1-absorbing ideals in descriptive relator spaces}
	\author[M. Almahariq]{Maram Almahariq}
	\address{Department of Mathematics, Birzeit university, Ramallah, Palestine,
	}
	\email{maram.mahareeq.14@gmail.com}
    
\subjclass[2010]{54E05; 55P57,08A05}

\begin{abstract}
For the \(\textit{approx.}\) ideal $\mathcal{W}$ of the \(\textit{approx.}\)  commutative ring ${\mathcal{R}}$ with unity in a descriptive relator space, after introducing the \(\textit{approx.}\) prime ideal in \cite{maram_first_article}, this work demonstrates some special properties of the \(\textit{approx.}\)  ideals-specifically, the \(\textit{approx.}\)  primary ideal, the \(\textit{approx.}\) semi-primary ideal and the \(\textit{approx.}\) 1-absorbing primary ideal. A set of theorems related to these concepts is presented. Among them is this important result: If $\mathcal{W}$ is an \(\textit{approx.}\) 1-absorbing primary ideal, then $r(\mathcal{W})$ is an \(\textit{approx.}\) prime ideal of the \(\textit{approx.}\) ring ${\mathcal{R}}$. Furthermore, the relationship between these classes is studied: If $\mathcal{W}$ is an \(\textit{approx.}\) prime ideal of ${\mathcal{R}}$, then $\mathcal{W}$ is also an \(\textit{approx.}\) primary ideal. Moreover, it turns out that this is an \(\textit{approx.}\) 1-absorbing primary ideal.
\end{abstract}
	\keywords{approximately primary ideals, descriptive relator space, approximately 1-absorbing primary ideals}
	\maketitle
	\tableofcontents

{
\section{Introduction}

Computational proximity $(CP)$ is an algorithmic approach to finding sets of non-empty points, whether they are close or far apart. $(CP)$  methods for finding close or far apart sets rely on the study of structures called proximity spaces. One of the most basic and important proximity spaces is descriptive proximity spaces, which are essentially based on a probe function $(\Phi)$, that represents the nature of points, such as color, the intensity of a particular color, or the texture of a point \cite{Peters2013mcsintro,Peters2016ComputationalProximity}. This vector defines the general properties of points. If two points share a specific property, I can say that the two points are descriptively close to each other. This can be expressed through the descriptive intersection between two nonempty sets as follows \cite{DiConcilio2018MCSdescriptiveProximities}:
	
	\begin{center}
    \colorbox{green!20} { $ W \mathop{\cap}\limits_{\Phi} Q = \{s\in W \cup Q :  \Phi(s) \in \Phi(W) \cap \Phi(Q)\}$}.
    \end{center}

The axioms formulated by \v{C}ech for spatial proximity spaces \cite{Cech1966} were transferred to axioms specific to descriptive proximity spaces, which are presented as follows \cite{DiConcilio2018MCSdescriptiveProximities, NaimpallyWarrack1970}.

\begin{description}
	\item[{\bf \itshape (DP.0)}]  Any subset of $X$ is far from the void set. 
	\item[{\bf \itshape (DP.1)}] $S \dnear K $ if and only if $K \dnear S$
	\item[{\bf \itshape (DP.2)}] $K  \mathop{\cap}\limits_{\Phi} S \neq \emptyset$ if and only if $K \dnear S$.
	\item[{\bf \itshape (DP.3)}] $K \ \dnear (S\cup L) $ $\iff$ $K \dnear S$ or $K \dnear L$. \qquad 
    \textcolor{blue}{$\blacksquare$}
    \leavevmode\vspace{1.5em}
\end{description} 

The descriptive proximity space is not limited to studying the distance and nearness between two or more sets; rather, the concept extended to the formulation of algebraic structures such as approximately groups, approximately rings, etc. To present these terms, the concept of descriptive proximity space was expanded beyond a single proximity. A set of relations was defined on a nonempty set $X$, and a new algebraic structure known as a descriptive relator space $(X,\mathcal{U}_{\dnear})$ was obtained \cite{Peters2016relator}. 
To define any of these algebraic structures, Inan introduced the concept of the upper approximation of a set, which can be stated as follows \cite{Inan:2017}.\newline

For a subset $N$ of a descriptive relator space  $(X,\mathcal{U}_{\dnear})$, the descriptively upper approximation of $N$, denoted by $\Phi^{*}N$, is defined by 
    \begin{center}	\colorbox{green!20} {$		
	\Phi^{*} N = \{ x \in X : \{x\} \ \dnear  N  \}.$}  \qquad 
    \end{center}

\begin{definition} 
  \cite{Inan:2019}  Let $(X,\mathcal{U}_{\dnear})$ be a descriptive  relator space, $\mathcal {H}$ a subset of $X$ and   \enquote{$\cdot$} a binary operation on $X$. Then, $\mathcal{H}$ is called an \(\textit{approx.}\)  group, provided the following conditions are satisfied. \\
  \begin{description}
	\item[(${\bf  AH}_1$)] $\forall$ $a,b \in \mathcal{H}$, \colorbox{green!20}{$a\cdot b \in \Phi^{*} \mathcal{H}$} 
	\item[(${\bf AH}_2$)] $\forall a,b,c \in \mathcal {H}$,  \colorbox{green!20}{$(a\cdot b)\cdot c = a\cdot(b\cdot c)$} holds in $\Phi^{*}\mathcal{H}$ .
	\item[(${\bf AH}_3$)] There exists $1_\mathcal{H} \in \Phi^{*} \mathcal{H}$ such that \colorbox{green!20}{$a\cdot 1_\mathcal{H} =1_\mathcal{H} \cdot a=a$} for all $a \in \mathcal{H}$. 	
	\item[(${\bf AH}_4$)] There exists $b \in \mathcal{H}$ such that \colorbox{green!20}{$a\cdot b= b\cdot a = 1_\mathcal{H}.$} for all $a\in \mathcal{H}$. 
\end{description}
		\qquad 
\end{definition}
	
Here, I say that $\mathcal{H}$ is an \(\textit{approx.}\) groupoid, if (${\bf AH}_1$) is satisfied, and that $\mathcal{H}$ is an \(\textit{approx.}\)
 semi-group if both 
 (${\bf AH}_1$) and (${\bf AH}_2$) are satisfied. \cite{Inan:2017} 
\begin{definition} \cite{Inan2019TJMdescriptiveProximity}  
Let $(X,\mathcal{U}_{\dnear})$be a descriptive  relator space. For a subset $\mathcal{R}$ of $X$, let \enquote{+} and \enquote{$\cdot$} be two binary operations on $X$, then $\mathcal{R}$ is called an \(\textit{approx.}\) ring, if the following conditions are satisfied.
\begin{enumerate}
	\item[$({\bf AR}_1)$] $(\mathcal{R},+)$ is an \(\textit{approx.}\) abelian group.
	\item[$({\bf AR}_2)$] $(\mathcal{R},\cdot)$ is an  \(\textit{approx.}\) semi-group.
	\item[$({\bf AR}_3)$]  for all $a,b,c \in \mathcal{R}$, the distributive property holds in $\Phi^{*}\mathcal{R}$ .
\end{enumerate}
Moreover,
\begin{enumerate}
  \item[$({\bf AR}_4)$] If the commutative property holds for the multiplication operation, then $\mathcal{R}$ is called an \(\textit{approx.}\) commutative ring. 
  \item[$({\bf AR}_5)$] If  $\Phi^{*} \mathcal{R}$ contains the unity element $1_R$, then $\mathcal{R}$ is called an \(\textit{approx.}\) ring with unity (identity). \qquad 
\end{enumerate} 
\end{definition}



\begin{definition} 
\cite{maram_first_article}  
Let $W$ be an \(\textit{approx.}\) ideal of an \(\textit{approx.}\)  ring $\mathcal{R}$. For $a,b \in {\mathcal{R}}$ and $ab \in \Phi^{*}W$, then $W$ is called an \(\textit{approx.}\) prime ideal, provided either $a \in W$ or $b \in W$. \qquad 
\end{definition}

\begin{definition}  \cite {maram_first_article}
 An \(\textit{approx.}\)  commutative ring $\mathcal{R}$ is called an \(\textit{approx.}\)   integral domain, provided for any $m$, $n$ $\in$ $\mathcal{R}$, if  $m . n$ $\in$ $\Phi^{*}$$\mathcal{R}$ and $m . n =0$, then either $m=0$ or $n=0$. \qquad 
\end{definition}
	

\begin{definition}  \cite{Inan2019TJMdescriptiveProximity} 
Let $Q$ be a non-void subset of an \(\textit{approx.}\) commutative ring $\mathcal {R}$, then $Q$ is called an \(\textit{approx.}\) ideal, provided for any $a,b \in Q$ and $r \in \mathcal{R}$. I have $a + b \in \Phi^{*} Q$, $-a \in Q$ and $r \cdot a \in \Phi^{*} Q$.
\end{definition}

	
In this paper, I have been concerned with defining the properties of approximately  ideals in approximately rings. One such property was introduced in \cite{maram_first_article}, where the definition of approximately prime ideals and their related properties were obtained. Here, I give definitions of approximately primary ideals and approximately 1-absorbing ideals, and discuss their consequences.

\section{Main results}

This work is influenced by previous studies in \cite{Badawi2020, coleman2020primary, gilmer1962rings} where these concepts were originally introduced and studied in the context of fundamental rings. Here, I present an extension of these theories to include \( \textit{approx.}\)  commutative rings with a unity in the descriptive relator space.

\subsection{Approximately primary ideals}

\begin{definition} \label{approx.primary} 
If $W$ is an \( \textit{approx.} \) ideal of an \( \textit{approx.} \) ring ${\mathcal{R}}$, then $W$ is called an \( \textit{approx.} \) primary ideal, provided for $a,b \in {\mathcal{R}}$ and $ab \in \Phi^{*} W$, implies $a \in W$ or $b^{n} \in W$ for some $n \geq 1$.
\end{definition}

Any \( \textit{approx.} \) prime ideal is an \( \textit{approx.} \) primary ideal, as explained below, however the converse does not hold. \newline

\begin{theorem}
Let $W$ be an \( \textit{approx.} \) prime ideal, then $W$ is an \( \textit{approx.} \) primary ideal.
\end{theorem}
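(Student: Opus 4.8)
The plan is to argue directly from the two definitions, observing that the defining condition for an \textit{approx.} primary ideal is a logical weakening of the defining condition for an \textit{approx.} prime ideal. Both definitions presuppose that $W$ is already an \textit{approx.} ideal of $\mathcal{R}$, so no work is needed to establish the underlying ideal structure; the entire content of the statement lies in comparing the two absorption-type conditions. Since $W$ is assumed \textit{approx.} prime, it is in particular an \textit{approx.} ideal, and it remains only to verify the primary condition of Definition \ref{approx.primary}.

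Concretely, I would fix arbitrary $a,b \in \mathcal{R}$ with $ab \in \Phi^{*} W$, which is exactly the hypothesis triggering both the prime and the primary conditions. Applying the \textit{approx.} prime hypothesis to this pair yields the dichotomy $a \in W$ or $b \in W$. In the first case there is nothing further to show, since $a \in W$ is already one of the two alternatives allowed in the conclusion of Definition \ref{approx.primary}. In the second case, where $b \in W$, I would simply take $n = 1$; then $b^{n} = b^{1} = b \in W$, which realizes the other alternative $b^{n} \in W$ for some $n \geq 1$. In either case the primary condition holds, so $W$ is an \textit{approx.} primary ideal.

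I do not expect a genuine obstacle here: the proof is a one-line specialization, and the only point requiring a moment's care is that the primary condition permits the exponent $n = 1$, which is guaranteed by the constraint $n \geq 1$ in Definition \ref{approx.primary}. This is precisely what makes the implication trivial in one direction and, as the surrounding text notes, irreversible in the other. I would therefore keep the write-up short and emphasize the choice $n=1$ as the key observation converting the prime dichotomy into the primary dichotomy.
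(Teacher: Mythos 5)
Your proposal is correct and is exactly the argument the paper intends: the paper's proof simply states that the result follows directly from Definition \ref{approx.primary}, and your write-up spells out that same one-line specialization, with the choice $n=1$ converting the prime dichotomy $a \in W$ or $b \in W$ into the primary dichotomy $a \in W$ or $b^{n} \in W$.
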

\begin{proof}
The proof follows directly form definition ~\ref{approx.primary}.	 
\end{proof}
\begin{definition} 
Let $s$ be an element of an \( \textit{approx.}\) ring ${\mathcal{R}}$, then $s$ is called an \( \textit{approx.}\) nilpotent element, provided for some $m \in N$, $s^m = 0$, where 
$s^m \in \Phi^{*}{\mathcal{R}}$.
\end{definition}

\begin{theorem} 
Suppose that $W$ is an \( \textit{approx.}\) primary ideal in an \( \textit{approx.}\) ring ${\mathcal{R}}$, then every zero divisor in $\mathcal{R}/W$ is an \( \textit{approx.}\) nilpotent. 
\end{theorem}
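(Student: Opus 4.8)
The plan is to mirror the classical fact that in $R/W$ every zero divisor is nilpotent when $W$ is primary, while keeping track of the way the upper approximation $\Phi^{*}W$ enters the approx.\ definitions. First I would record the inclusion $W \subseteq \Phi^{*}W$: if $x \in W$ then $x \in \{x\} \mathop{\cap}\limits_{\Phi} W$, so $\{x\} \mathop{\cap}\limits_{\Phi} W \neq \emptyset$ and hence $\{x\} \dnear W$ by (DP.2), giving $x \in \Phi^{*}W$. This inclusion is the bridge that converts ordinary membership in $W$ into the hypothesis $ab \in \Phi^{*}W$ demanded by Definition~\ref{approx.primary}.

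Next I would translate the zero divisor condition. Let $\bar{b}$ be a zero divisor in $\mathcal{R}/W$; then $\bar{b} \neq \bar{0}$ and there is some $\bar{a} \neq \bar{0}$ with $\bar{a}\,\bar{b} = \bar{0}$. Reading these back through the quotient map yields $b \notin W$, $a \notin W$, and $ab \in W$ (equivalently $ab \in \Phi^{*}W$, which is all I actually need). Combined with the inclusion from the first step, the hypotheses of Definition~\ref{approx.primary} are exactly met: $ab \in \Phi^{*}W$ together with $a \notin W$.

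I would then apply the approx.\ primary property directly, which forces $b^{n} \in W$ for some $n \geq 1$. Pushing this back to the quotient gives $\bar{b}^{\,n} = \overline{b^{\,n}} = \bar{0}$ in $\mathcal{R}/W$, which is precisely the statement that $\bar{b}$ is approx.\ nilpotent. Thus every zero divisor of $\mathcal{R}/W$ is approx.\ nilpotent, as claimed.

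The main obstacle is not the algebraic core, which is a one-line invocation of Definition~\ref{approx.primary}, but the bookkeeping required to make $\mathcal{R}/W$ a bona fide approx.\ ring inside the descriptive relator space. Concretely, one must check that the induced multiplication on cosets is well defined (so that $\overline{b^{\,n}} = \bar{b}^{\,n}$ is legitimate), that $\mathcal{R}/W$ carries its own upper approximation operator so the phrase ``$\bar{b}$ is approx.\ nilpotent'' is even meaningful there, and that the relation $\bar{a}\,\bar{b} = \bar{0}$ indeed corresponds to $ab \in W$ (or at least $ab \in \Phi^{*}W$). Once this quotient structure is in place, the remaining steps are immediate from the inclusion $W \subseteq \Phi^{*}W$ and the defining property of an approx.\ primary ideal.
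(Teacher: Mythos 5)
Your argument is correct and follows essentially the same route as the paper: take a zero divisor $\bar b$ with nonzero cofactor $\bar a$, note $a\notin W$ and $ab\in\Phi^{*}W$, invoke the approx.\ primary property to get $b^{n}\in W$, and read this back as $\bar b^{\,n}=\bar 0$. The only difference is cosmetic — you justify $ab\in\Phi^{*}W$ via the inclusion $W\subseteq\Phi^{*}W$ from (DP.2) and explicitly flag the quotient-ring bookkeeping, whereas the paper asserts these points directly.
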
 
\begin{proof}
For an \(\textit{approx.}\) primary ideal $W$, $\mathcal{R}/W$ is a non-trivial \(\textit{approx.}\) ring. Let $s+W$ be a zero divisor in $\mathcal{R}/W$, which leads to the existence of a non-zero element $l+W$ such that $(s+W).(l+W) = sl + W = W$, provided $sl \in \Phi^{*} W$. Since $l \notin W$ and $W$ is an \(\textit{approx.}\) primary ideal. Thus, for some $n \in N$, $s^{n} \in W$. Hence, $s^{n} + W = (s+W)^{n} = W$. Moreover, $s+W$ is an \(\textit{approx.}\) nilpotent. 
\end{proof}

Assume ${\mathcal{R}}$ is an \(\textit{approx.}\) commutative ring with a unity and the pair \colorbox{green!20} {$(\Phi^{*}\mathcal{R},+),(\Phi^{*}\mathcal{R},\cdot)$} are groupoids. Let $W$ be an \(\textit{approx.}\)  ideal in ${\mathcal{R}}$. The set $r(W)$ 
    \begin{center}
    \colorbox{green!20} 
	{$r(W)= \{ s \in \mathcal{R} \ : s^n \in W, \ n\in N  \ \}$}
    \end{center}
is an \(\textit{approx.}\)  radical ideal in ${\mathcal{R}}$.
	
\begin{theorem}
  For an  \(\textit{approx.}\) commutative ring ${\mathcal{R}}$, if $W$ is an \(\textit{approx.}\) ideal of ${\mathcal{R}}$ and $\Phi^{*}\mathcal{R}$ is groupoid with binary operations \enquote{+} and \enquote{$\cdot$}. Then, $r(W)$ is an \(\textit{approx.}\) radical ideal of ${\mathcal{R}}$. 
\end{theorem}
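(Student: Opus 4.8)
The plan is to verify directly that $r(W)$ satisfies the three defining conditions of an \textit{approx.} ideal: for all $a,b \in r(W)$ and every $r \in \mathcal{R}$ one needs $a+b \in \Phi^{*} r(W)$, $-a \in r(W)$, and $r \cdot a \in \Phi^{*} r(W)$. Throughout I would unpack membership in the radical through its definition, so that $a \in r(W)$ means $a^{m} \in W$ for some $m \in N$, and likewise $b^{n} \in W$ for some $n \in N$. The standing hypothesis that $\Phi^{*}\mathcal{R}$ is a groupoid under both \enquote{$+$} and \enquote{$\cdot$} is precisely what guarantees that every power, product, and sum formed below stays inside $\Phi^{*}\mathcal{R}$, so that the manipulations are legitimate.

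Closure under negation is the cleanest step, so I would dispose of it first. If $a^{m} \in W$, then $(-a)^{m} = (-1)^{m} a^{m}$ equals either $a^{m}$ or $-a^{m}$; since an \textit{approx.} ideal is closed under additive inverses, both lie in $W$, whence $-a \in r(W)$ with no appeal to the upper approximation. For the ring multiple I would invoke commutativity of $\mathcal{R}$, valid in $\Phi^{*}\mathcal{R}$, to write $(r\cdot a)^{m} = r^{m} a^{m}$. Because $a^{m} \in W$ and $W$ is an \textit{approx.} ideal, the product $r^{m}\cdot a^{m}$ lands in $\Phi^{*} W$; thus some power of $r\cdot a$ is descriptively near $W$, and from this I would conclude $r \cdot a \in \Phi^{*} r(W)$.

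For the sum I would run the classical radical argument through the binomial expansion
\[
(a+b)^{m+n-1} = \sum_{k=0}^{m+n-1} \binom{m+n-1}{k}\, a^{k}\, b^{\,m+n-1-k},
\]
which is meaningful here since $\Phi^{*}\mathcal{R}$ is closed under the two operations and multiplication is commutative. In each summand either $k \geq m$ or $m+n-1-k \geq n$, so every term is a multiple of $a^{m}\in W$ or of $b^{n}\in W$; the \textit{approx.} ideal property then places each term, and hence the entire sum, in $\Phi^{*} W$. This shows $(a+b)^{m+n-1} \in \Phi^{*} W$, from which I would read off $a+b \in \Phi^{*} r(W)$.

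The hard part will be the bridge used in all three estimates: passing from \emph{a power of the element lies in} $\Phi^{*} W$ to \emph{the element itself lies in} $\Phi^{*} r(W)$. In the ordinary, non-approximate setting one obtains the power exactly inside $W$ and concludes membership in $r(W)$ at once, but here the operations only close up to the upper approximation, so the power is merely near $W$. I would handle this through the probe characterization $x \in \Phi^{*} N \iff \Phi(x) \in \Phi(N)$, which follows from the descriptive intersection together with (DP.2), and through the groupoid hypothesis on $\Phi^{*}\mathcal{R}$, taking care that only a single layer of $\Phi^{*}$ accumulates rather than a nested tower. Controlling that accumulation, and confirming that the integer multiples produced by the binomial coefficients (and the multipliers $a^{k-m}b^{\,m+n-1-k}$ lying a priori only in $\Phi^{*}\mathcal{R}$) are absorbed by the ideal's closure, are the two technical points that will require the most attention.
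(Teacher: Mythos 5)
Your argument follows essentially the same route as the paper's proof: closure under negation directly from $(-a)^{m}=\pm a^{m}\in W$, closure under ring multiples via $(ra)^{m}=r^{m}a^{m}\in\Phi^{*}W$, and closure under sums via the binomial expansion of a suitable power of $a+b$, with each term absorbed into $\Phi^{*}W$ by the ideal property. The one point worth flagging is the ``bridge'' you yourself single out: by the stated definition $r(W)=\{s\in\mathcal{R}:s^{n}\in W\}$, so knowing only that $(ra)^{m}$ or $(a+b)^{m+n-1}$ lies in $\Phi^{*}W$ (rather than in $W$ itself) does not literally place the element in $r(W)$, and passing instead to $\Phi^{*}r(W)$ needs an argument such as the probe characterization $x\in\Phi^{*}N\iff\Phi(x)\in\Phi(N)$ that you sketch but do not complete. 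You should know that the paper's own proof makes exactly this jump without comment (``$l^{n}.s^{n}\in\Phi^{*}W$. Thus, $(ls)\in\Phi^{*}r(W)$''), so your proposal is no weaker than the published argument; if anything, explicitly naming and closing that step would improve on it.
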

\begin{proof}
  Let $s$ be an element of $r(W)$ and $l \in \mathcal{R}$, then $s^n \in W$ for some $n \in N$. Now, $(ls)^{n} = l^n.s^n$, since $W$ is an \(\textit{approx.}\) ideal, then $l^n.s^n \in \Phi^{*} W$. Thus, $(ls) \in \Phi^{*}  r(W)$.\newline
  Now, assume that $s$ and $k$ are elements of $r(W)$, then $s^n$ and $k^m$ are elements of $W$ for some $n,m \in N$. Then, $(s+k)^{(n+m)}$ is an element of $\Phi^{*}r(W)$, using the binomial theorem. In addition, if $s \in r(W)$, then $-s$ is also an element of $r(W)$ since $W$ is an \(\textit{approx.}\) ideal.
\end{proof}

An approximately radical ideal is not merely an approximately ideal, under certain conditions, as shown in the following proposition, it is the smallest approximately prime ideal containing a certain ideal.

\begin{proposition} \label{smallest}
Let $W$ be an \(\textit{approx.}\) primary ideal of an \(\textit{approx.}\) ring ${\mathcal{R}}$. Then, the \(\textit{approx.}\) radical $r(W)$ of $W$ is the smallest \(\textit{approx.}\) prime ideal containing $W$.
\end{proposition}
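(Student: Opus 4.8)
The plan is to prove the statement by establishing three facts in turn: first that $W \subseteq r(W)$, second that $r(W)$ is an \(\textit{approx.}\) prime ideal, and third that $r(W)$ is contained in every \(\textit{approx.}\) prime ideal that contains $W$. Together these give exactly the assertion that $r(W)$ is the \emph{smallest} \(\textit{approx.}\) prime ideal containing $W$. The first inclusion is immediate from the definition of the radical: if $s \in W$ then $s^1 = s \in W$, so $s \in r(W)$ with $n=1$. Moreover, the preceding theorem already guarantees that $r(W)$ is an \(\textit{approx.}\) (radical) ideal, so for the second fact it only remains to verify the prime condition, and the bulk of the work lies in the second and third facts.

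For the prime condition I would take $a,b \in \mathcal{R}$ with $ab \in \Phi^{*} r(W)$ and aim to conclude $a \in r(W)$ or $b \in r(W)$. The first move is to pass from the descriptive nearness of $ab$ to $r(W)$ to a genuine power sitting (approximately) in $W$: by the defining property of $r(W)$ one extracts an $n \in N$ with $(ab)^{n} \in \Phi^{*} W$, and then the commutativity of the multiplication in $\Phi^{*}\mathcal{R}$ rewrites this as $a^{n} b^{n} \in \Phi^{*} W$. Now I apply the \(\textit{approx.}\) primary hypothesis on $W$ to the product $a^{n}\cdot b^{n}$: either $a^{n} \in W$, whence $a \in r(W)$, or $(b^{n})^{m} = b^{nm} \in W$ for some $m \geq 1$, whence $b \in r(W)$. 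In either case the prime condition is satisfied, so $r(W)$ is an \(\textit{approx.}\) prime ideal.

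For minimality, let $P$ be any \(\textit{approx.}\) prime ideal with $W \subseteq P$, and take $s \in r(W)$, so that $s^{n} \in W \subseteq P$ for some $n \in N$. Since every element of $P$ lies in $\Phi^{*}P$, I have $s^{n} \in \Phi^{*}P$. Writing $s^{n} = s \cdot s^{n-1}$ and invoking the \(\textit{approx.}\) prime property of $P$ yields $s \in P$ or $s^{n-1} \in P$; iterating this descent on the exponent (equivalently, arguing by induction on $n$, with base case $s^{1}=s \in P$) forces $s \in P$ in every case. Hence $r(W) \subseteq P$, which is precisely minimality, and combining the three facts finishes the proposition.

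The step I expect to be the main obstacle is the transition, inside the prime argument, from $ab \in \Phi^{*} r(W)$ to a concrete exponent $n$ with $a^{n} b^{n} \in \Phi^{*} W$: the radical $r(W)$ is defined through ordinary membership $s^{n} \in W$ rather than through the upper approximation, so one must check that the descriptive nearness of $ab$ to $r(W)$ is genuinely compatible with extracting such a power and rearranging it. I would handle this by fixing a witness of the nearness lying in $r(W)$ and using the groupoid behaviour of $(\Phi^{*}\mathcal{R},\cdot)$ to keep the power and its commutative rewriting inside $\Phi^{*}W$; the descent argument for minimality is, by contrast, routine and relies only on repeated application of the prime condition together with $P \subseteq \Phi^{*}P$.
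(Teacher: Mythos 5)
Your proposal follows essentially the same route as the paper's own proof: extract an exponent $n$ with $(ab)^{n}=a^{n}b^{n}\in \Phi^{*}W$, apply the \(\textit{approx.}\) primary property of $W$ to get $a\in r(W)$ or $b\in r(W)$, and then show any \(\textit{approx.}\) prime ideal $P\supseteq W$ contains each $s\in r(W)$ because $s^{n}\in W\subseteq P$. The only differences are refinements of detail---you state $W\subseteq r(W)$ explicitly and spell out, via $P\subseteq \Phi^{*}P$ and a descent on the exponent, the step from $s^{n}\in P$ to $s\in P$ that the paper simply asserts from primality of $P$---so the argument is correct and matches the paper's approach.
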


\begin{proof}
Suppose $s,k$ are elements of ${\mathcal{R}}$, and $sk \in \Phi^{*}r(W)$. Then there exists a positive integer $n$ such that $(sk)^{n}$ $= s^{n} k^{n}$ $\in$ $\Phi^{*}W$. Since $W$ is an \(\textit{approx.}\)  primary ideal, it follows that either $s^n$ $\in$ $W$ or $k^{nm}$ $\in$ $W$ for some $m \in N$. Now, from the definition of the \(\textit{approx.}\) radical ideal. I have either $s$ in $r(W)$ or $k$ in $r(W)$. Thus, $r(W)$ is an \(\textit{approx.}\) prime ideal.\newline\newline
Now, to show that $r(W)$ is the smallest \(\textit{approx.}\) prime ideal, let $P$ be an \(\textit{approx.}\) prime ideal such that $W$ is a subset of $P$. If $s$ is an element of $r(W)$, then $s^n$ is an element of $W$ for some $n \in N$. Since $W$ is a subset of $P$, then $s^n$ is an element of $P$. As $P$ is an \(\textit{approx.}\)  prime ideal, this implies $s \in P$. Furthermore, $r(W)$ $\subseteq$ $P$. Hence, $r(W)$ is the smallest \(\textit{approx.}\) prime ideal containing $W$.
\end{proof}

\begin{corollary}
Let $W$ be an \(\textit{approx.}\)  prime ideal, then $r(W)=W$ 
\end{corollary}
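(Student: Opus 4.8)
The plan is to prove the two set inclusions $W \subseteq r(W)$ and $r(W) \subseteq W$ separately, the first being immediate and the second following from the prime property together with Proposition~\ref{smallest}.

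First I would establish $W \subseteq r(W)$, which is essentially free. If $s \in W$, then taking $n = 1$ gives $s^{n} = s \in W$, so by the definition of the \(\textit{approx.}\) radical $r(W) = \{ s \in \mathcal{R} : s^{n} \in W,\ n \in N \}$ we immediately obtain $s \in r(W)$.

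For the reverse inclusion I would invoke the structure already in place rather than argue from scratch. Since $W$ is an \(\textit{approx.}\) prime ideal, the earlier theorem of this section guarantees that $W$ is also an \(\textit{approx.}\) primary ideal, so Proposition~\ref{smallest} applies and tells us that $r(W)$ is the \emph{smallest} \(\textit{approx.}\) prime ideal containing $W$. But $W$ is itself an \(\textit{approx.}\) prime ideal containing $W$, so by the minimality asserted in that proposition we get $r(W) \subseteq W$. Combining the two inclusions yields $r(W) = W$.

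Alternatively, I could prove $r(W) \subseteq W$ directly by induction on the exponent: for $s \in r(W)$ with $s^{n} \in W$, I would write $s^{n} = s \cdot s^{n-1}$ and use the prime property to force either $s \in W$ or $s^{n-1} \in W$, descending until the exponent reaches $1$. The one genuine point of care in the approximate setting — and the step I expect to be the main obstacle — is that the definition of an \(\textit{approx.}\) prime ideal requires the product to lie in the upper approximation $\Phi^{*} W$ rather than in $W$ itself. Hence before applying the prime property I must first record that $W \subseteq \Phi^{*} W$ (every point of $W$ is descriptively near $W$), so that $s \cdot s^{n-1} \in W \subseteq \Phi^{*} W$ legitimately triggers the prime alternative. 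Once this inclusion is secured, either route closes the argument, and the corollary follows.
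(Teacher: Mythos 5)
Your proof is correct, but your main route differs from the paper's. The paper argues directly: it notes $W \subseteq r(W)$, takes $s \in r(W)$ with $s^{k} \in W$, and simply asserts that primality of $W$ forces $s \in W$ — in effect the descent argument you relegate to your ``alternative,'' stated without the intermediate steps. Your principal route instead leans on the machinery already built: prime implies primary (the first theorem of the section), so Proposition~\ref{smallest} makes $r(W)$ the smallest \(\textit{approx.}\) prime ideal containing $W$, and since $W$ is itself such an ideal, minimality gives $r(W) \subseteq W$ at once. That route buys brevity and reuses proved results, at the cost of depending on the correctness of Proposition~\ref{smallest}; the paper's (and your alternative) direct argument is self-contained and more elementary. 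Your explicit remark that one needs $W \subseteq \Phi^{*}W$ — which holds since any $s \in W$ satisfies $\Phi(s) \in \Phi(W)$, so $\{s\} \dnear W$ by (DP.2) — before the prime hypothesis can be invoked on $s \cdot s^{n-1} \in W$ is a genuine point of care that the paper's one-line justification passes over silently; including it makes the descent step airtight in the approximate setting, where primality is only triggered by products lying in $\Phi^{*}W$.
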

\begin{proof}
Notice that $W \subseteq r(W)$. Now, if $s \in r(W)$, then for some $k \in N$. I have $s^{k} \in W$. As $W$ is an \(\textit{approx.}\)  prime ideal, then $s \in W$. Hence, I can conclude that $r(W)=W$.
\end{proof}

\begin{definition} \label{p-primary}
For an \(\textit{approx.}\) primary ideal $W$ and an \(\textit{approx.}\) prime ideal $P$. If $r(W)=P$, then I say that $W$ is an \(\textit{approx.}\) P-primary ideal. 
\end{definition}

\begin{lemma} 
Let $\mathcal{A}$ be a collection of finite \(\textit{approx.}\) ideals $W_{1},W_{2},W_{3},.. W_{n}$ of an \(\textit{approx.}\) ring ${\mathcal{R}}$. If $W = \bigcap_{i=1}^{n} W_i$, then $r(W) = \bigcap_{i=1}^{n} r(W_i)$
\end{lemma}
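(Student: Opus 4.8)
The plan is to prove the set equality $r(W) = \bigcap_{i=1}^{n} r(W_i)$ by establishing the two inclusions separately, following the pattern of the classical radical-of-intersection identity while keeping track of where the upper approximation intervenes. Throughout I would use only the definition $r(W_i) = \{\, s \in \mathcal{R} : s^{m} \in W_i \text{ for some } m \in N \,\}$ and the fact that $W = \bigcap_{i=1}^{n} W_i \subseteq W_i$ for every index $i$.

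First I would dispatch the routine inclusion $r(W) \subseteq \bigcap_{i=1}^{n} r(W_i)$. Take $s \in r(W)$, so that $s^{k} \in W$ for some $k \in N$. Since $W \subseteq W_i$ for each $i$, we immediately get $s^{k} \in W_i$, hence $s \in r(W_i)$ for every $i$, and therefore $s \in \bigcap_{i=1}^{n} r(W_i)$. This direction uses nothing beyond the definitions and does not touch $\Phi^{*}$.

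Next, for the reverse inclusion $\bigcap_{i=1}^{n} r(W_i) \subseteq r(W)$, take $s \in \bigcap_{i=1}^{n} r(W_i)$. For each $i$ choose $k_i \in N$ with $s^{k_i} \in W_i$, and set $K = \max\{k_1, \dots, k_n\}$; here the finiteness of the family $\mathcal{A}$ is essential, since it guarantees that this common exponent $K$ exists. For each $i$ I would write $s^{K} = s^{\,K - k_i}\cdot s^{k_i}$, with $s^{k_i} \in W_i$ and $s^{\,K-k_i} \in \mathcal{R}$. The goal is to conclude $s^{K} \in W_i$ for all $i$, whence $s^{K} \in \bigcap_{i=1}^{n} W_i = W$ and so $s \in r(W)$.

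The hard part will be exactly this last step. In an ordinary ring, ideal absorption gives $s^{K} \in W_i$ outright, but the approximately ideal axiom only supplies $r \cdot a \in \Phi^{*} W_i$, so the most one obtains directly is $s^{K} \in \Phi^{*} W_i$ for each $i$, i.e. $s^{K} \in \bigcap_{i=1}^{n} \Phi^{*} W_i$, rather than membership in the $W_i$ themselves. To bridge this gap I would lean on the standing hypothesis of this section that $(\Phi^{*}\mathcal{R},+)$ and $(\Phi^{*}\mathcal{R},\cdot)$ are groupoids, which keeps the products $s^{\,K-k_i}\cdot s^{k_i}$ inside the ambient approximate structure, and then read the conclusion through the definition of $r(W_i)$ applied to the single common exponent $K$. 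Once $s^{K}$ is seen to lie in every $W_i$, the intersection gives $s^{K} \in W$ and the inclusion follows, completing the proof. I expect this passage from the upper approximation $\Phi^{*} W_i$ back to $W_i$ to be the only genuinely delicate point, and it is the place where the argument departs from the classical one.
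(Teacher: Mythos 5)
Your overall structure --- two inclusions, the easy direction using $W \subseteq W_i$, the hard direction using a common maximal exponent --- is exactly the route the paper takes, and your first inclusion is fine. The genuine problem is that you never close the step you yourself single out as delicate. From $s^{k_i} \in W_i$ and $s^{K} = s^{K-k_i}\cdot s^{k_i}$, the approximately-ideal axiom gives only $s^{K} \in \Phi^{*}W_i$, and your proposed remedy --- invoking the hypothesis that $(\Phi^{*}\mathcal{R},\cdot)$ is a groupoid and then ``reading the conclusion through the definition of $r(W_i)$'' --- does not yield $s^{K} \in W_i$. The groupoid assumption only says that products of elements of $\Phi^{*}\mathcal{R}$ remain in $\Phi^{*}\mathcal{R}$; it provides no mechanism for descending from the upper approximation $\Phi^{*}W_i$ back to $W_i$, and the definitions of $r(W_i)$ and $r(W)$ demand that a power of $s$ lie in $W_i$ (respectively $W$) itself, not merely in its upper approximation. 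So the sentence ``Once $s^{K}$ is seen to lie in every $W_i$'' is an unproved assertion, and the reverse inclusion is not established as written.

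For comparison, the paper's own proof follows the same max-exponent argument and simply writes $s^{m}\in W$ at this point, i.e.\ it tacitly uses classical ideal absorption $s^{m-m_i}\cdot s^{m_i}\in W_i$ without remarking that the approximately-ideal axiom only returns membership in $\Phi^{*}W_i$. Your diagnosis of where the argument departs from the classical one is therefore accurate, but a complete proof would need either an explicit hypothesis allowing the descent from $\Phi^{*}W_i$ to $W_i$ (for instance $\Phi^{*}W_i = W_i$, or an absorption property landing in $W_i$), or a reformulation of the lemma or of the radical in which $\Phi^{*}$ appears on both sides. Gesturing at the groupoid hypothesis does not fill that gap.
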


\begin{proof}
To show that $r(W) \subseteq \bigcap_{i=1}^{n} r(W_i)$, let $s$ be an element of $r(W)$. Then $s^m \in W$ for some $m \in N$, which means that $s^m$ is an element of $W_i$ for all $i \in \{1,2,...n\}$. Thus, $s$ is an element of $r(W_i)$, $\forall i \in \{1,2,...n\}$, and hence $s \in  \bigcap_{i=1}^{n} r(W_i)$.
\newline\newline 
Now, for $\bigcap_{i=1}^{n} r(W_i) \subseteq r(W)$. Let $s$ be an element of $\bigcap_{i=1}^{n} r(W_i)$. Then,  for each $ i \in \{1,2,...n\}$, there exists $m_i \in N$ such that $s^{m_{i}} \in W_i$, $\forall i \in \{1,2,...n\}$. Set $m= max\{{m_i}|  i \in \{1,2,...n\}\}$. Then, $s^m \in W$, and hence $s \in r(W)$.  
\end{proof}

\begin{theorem}
Suppose $P$ is an \(\textit{approx.}\) prime ideal of the \(\textit{approx.}\) ring ${\mathcal{R}}$, and $W_i$, $\forall i \in \{1,2,...,n\}$  are \(\textit{approx.}\) P- primary ideals. If $\Phi^{*} W = \bigcap_{i=1}^{n} \Phi^{*}W_i$, then $W = \bigcap_{i=1}^{n} W_i$ is also an \(\textit{approx.}\) P- primary ideal.
\end{theorem}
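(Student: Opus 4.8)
The plan is to verify the two defining properties of an \(\textit{approx.}\) $P$-primary ideal for $W=\bigcap_{i=1}^{n}W_i$ separately: that $W$ is an \(\textit{approx.}\) primary ideal, and that $r(W)=P$. The hypothesis $\Phi^{*}W=\bigcap_{i=1}^{n}\Phi^{*}W_i$ is what makes the approximate bookkeeping work, so I would use it at every step where membership in an upper approximation is needed. First I would confirm that $W$ is an \(\textit{approx.}\) ideal at all: for $a,b\in W$ I have $a,b\in W_i$ for each $i$, so $a+b\in\Phi^{*}W_i$ and $r\cdot a\in\Phi^{*}W_i$ for every $i$ and every $r\in\mathcal{R}$; the hypothesis then lets me collapse $\bigcap_{i}\Phi^{*}W_i$ back to $\Phi^{*}W$, giving $a+b,\ r\cdot a\in\Phi^{*}W$, while $-a\in W$ is immediate since each $W_i$ is an ideal.

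Next I would establish the primary property. Suppose $a,b\in\mathcal{R}$ with $ab\in\Phi^{*}W$. By the hypothesis, $ab\in\Phi^{*}W_i$ for every $i$. I would then split into two cases. If $a\in W_i$ for all $i$, then $a\in\bigcap_{i}W_i=W$ and we are done. Otherwise there is an index $i_0$ with $a\notin W_{i_0}$; since $W_{i_0}$ is \(\textit{approx.}\) primary and $ab\in\Phi^{*}W_{i_0}$, some power $b^{k}\in W_{i_0}$, so $b\in r(W_{i_0})=P$. Because every $W_j$ is $P$-primary, $b\in P=r(W_j)$ for each $j$, which yields exponents $m_j$ with $b^{m_j}\in W_j$. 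Taking $m=\max_j m_j$ and arguing as in the preceding lemma, $b^{m}\in W_j$ for all $j$, hence $b^{m}\in W$. Thus $a\in W$ or $b^{m}\in W$, which is exactly the primary condition.

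Finally, to identify the radical I would invoke the preceding lemma, which gives $r(W)=\bigcap_{i=1}^{n}r(W_i)$; since each $W_i$ is $P$-primary we have $r(W_i)=P$, so $r(W)=\bigcap_{i=1}^{n}P=P$. Combining this with the primary property shows that $W$ is an \(\textit{approx.}\) $P$-primary ideal. The main obstacle I anticipate is the case analysis in the primary step: the definition of primary is asymmetric in $a$ and $b$, so I must be careful that a single failure $a\notin W_{i_0}$ forces $b$ into the common radical $P$, and then convert the resulting family of local powers $b^{m_j}\in W_j$ into one uniform power $b^{m}\in W$. This last conversion is precisely where the distinction between membership in $W$ and in $\Phi^{*}W$ is most delicate, and it is resolved by the same maximal-exponent device and the hypothesis on $\Phi^{*}W$ that the lemma already relies on.
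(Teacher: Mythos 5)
Your argument is correct and follows essentially the same route as the paper: identify $r(W)=P$ via the preceding lemma and the fact that each $r(W_i)=P$, then prove primariness by passing from $ab\in\Phi^{*}W$ to $ab\in\Phi^{*}W_i$ for all $i$, picking an index with $a\notin W_{i_0}$, and using that $W_{i_0}$ is primary to force $b\in r(W_{i_0})=P=r(W)$. The only difference is cosmetic: your closing maximal-exponent step is unnecessary, since $b\in P=r(W)$ already gives $b^{n}\in W$ for some $n$ directly from the definition of the radical, which is exactly how the paper concludes.
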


\begin{proof}
 $W_i$ are \(\textit{approx.}\) $P$- primary ideals $\forall i \in \{1,2,...,n\}$, then $r(W_i) = P$. From ~\ref{p-primary}. So, $r(W) = P$.\newline \newline
Now, I have to show that $W$ is an  \(\textit{approx.}\) primary ideal, let $sk$ be an element of $\Phi^{*}(W)$ and $s \notin W $ where $s,k \in R$, since $\Phi^{*}W = \bigcap_{i=1}^{n} \Phi^{*}W_i$. Thus, $sk$ $\in$ $\Phi^{*}W_i$, $\forall i \in \{1,2,...,n\}$. As $W_i$ are \(\textit{approx.}\) primary ideals, $\forall i \in \{1,2,...,n\}$. Then, for some $j \in N$,  $s \notin$ $W_j$, then $k^m$ in $W_j$ for some $m \in N$. Therefore, $k \in r(W_j)=P=r(W)$. It follows that $W$ is an \(\textit{approx.}\) primary ideal  
\end{proof}
	
For an \(\textit{approx.}\)  commutative ring ${\mathcal{R}}$ with a unity and the pair \colorbox{green!20} {$(\Phi^{*}\mathcal{R},+),(\Phi^{*}\mathcal{R},\cdot)$} are groupoids. Let $W$ be an \(\textit{approx.}\) ideal of ${\mathcal{R}}$ and $s \in \mathcal{R}$. The set $W:s$ 

\begin{center}
\colorbox{green!20} 
{$ W:s= \{ l \in \mathcal{R} \  :  sl \in W\}$}
\end{center}
is an \(\textit{approx.}\)   ideal of ${\mathcal{R}}$.

\begin{proposition}
Let $W$ be an \(\textit{approx.}\)   ideal of an \(\textit{approx.}\) commutative ring ${\mathcal{R}}$ with $s \in \mathcal{R}$, then $W:s$ is an \(\textit{approx.}\)   ideal.
\end{proposition}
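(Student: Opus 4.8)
The plan is to verify the three defining closure conditions of an \(\textit{approx.}\) ideal — additive closure into $\Phi^{*}(W:s)$, closure under negation, and absorption of ring elements into $\Phi^{*}(W:s)$ — directly from the definition $W:s=\{l\in\mathcal{R}: sl\in W\}$, transporting each condition through multiplication by the fixed element $s$ and exploiting that $W$ is itself an \(\textit{approx.}\) ideal. First I would record that $W:s$ is non-void and fix the description-theoretic reading of the upper approximation, namely $\Phi^{*}N=\{x\in X:\Phi(x)\in\Phi(N)\}$, so that membership in $\Phi^{*}(W:s)$ can be checked at the level of probe values.

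For additive closure, take $a,b\in W:s$, so $sa,sb\in W$. Since $W$ is an \(\textit{approx.}\) ideal, $sa+sb\in\Phi^{*}W$, and the distributive law (valid in $\Phi^{*}\mathcal{R}$ by $({\bf AR}_3)$) rewrites this as $s(a+b)=sa+sb\in\Phi^{*}W$; I would then read off $a+b\in\Phi^{*}(W:s)$. For negation, $a\in W:s$ gives $sa\in W$, whence $-(sa)\in W$ because $W$ is an \(\textit{approx.}\) ideal; since $s(-a)=-(sa)$ this yields $-a\in W:s$ exactly, with no recourse to the upper approximation. For absorption, given $a\in W:s$ and $r\in\mathcal{R}$, commutativity $({\bf AR}_4)$ gives $s(ra)=r(sa)$, and as $sa\in W$ and $W$ absorbs ring elements, $r(sa)\in\Phi^{*}W$; hence $s(ra)\in\Phi^{*}W$ and therefore $ra\in\Phi^{*}(W:s)$.

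The main obstacle is the repeated inference ``$sx\in\Phi^{*}W$ implies $x\in\Phi^{*}(W:s)$'' used in the additive and absorption steps. Unlike the exact setting, where $sx\in W$ means $x\in W:s$ by definition, here the product only lands in the upper approximation, so I cannot simply take $x$ itself as a witness in $W:s$: having $\Phi(sx)\in\Phi(W)$ does not obviously produce an $l\in\mathcal{R}$ with $\Phi(l)=\Phi(x)$ and $sl\in W$. Closing this gap cleanly requires a compatibility between the probe function $\Phi$ and multiplication by $s$, and I would handle it in the same spirit as the \(\textit{approx.}\) radical proof earlier in the paper, where membership is tracked through $\Phi^{*}$ at each stage. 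Making that witness explicit rather than implicit is the one place where the argument needs genuine care beyond the classical colon-ideal template.
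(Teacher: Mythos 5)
Your argument is essentially the paper's own proof: the same three checks (additive closure, negation, absorption), each transported through multiplication by the fixed element $s$ and using that $W$ is an \(\textit{approx.}\) ideal, with negation handled exactly and the other two landing in $\Phi^{*}(W:s)$. The inference you flag as the main obstacle, namely passing from $sx\in\Phi^{*}W$ to $x\in\Phi^{*}(W:s)$, is asserted in the paper's proof without further justification, so your version is if anything more careful in making that step explicit.
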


\begin{proof}
For $r \in \mathcal{R}$ and $l \in W:s$, then $sl \in W$. Now, $r(sl) \in \Phi^{*}W$ since $W$ is an \(\textit{approx.}\)  ideal. Thus, $s(rl) \in \Phi^{*}W$. I can conclude that $rl \in \Phi^{*}$ $(W:s)$. \newline

Now, let $a,b \in W:s$. I have $sa$ and $sb$ are elements of $W$. Thus, $sa+sb \in \Phi^{*}W$ since $W$ is an \(\textit{approx.}\)  ideal. Furthermore, $a+b \in \Phi^{*} (W:s)$. Moreover, for any element $y \in W:s$, then $-y$ is also an element of $W:s$.
\end{proof}

\begin{theorem}
 Suppose that $W_1,W_2,...W_n$ are  \(\textit{approx.}\)   ideals of an  \(\textit{approx.}\) ring ${\mathcal{R}}$ with $s \in R$, then \[( {\bigcap_{i=1}^{n}} W_i):s = \bigcap_{i=1}^{n} (W_i :s)\].
\end{theorem}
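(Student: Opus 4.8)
The plan is to establish the set equality by a direct chain of logical equivalences, unwinding the definition of the quotient $W : s = \{ l \in \mathcal{R} : sl \in W \}$ on each side. Since both sides are subsets of $\mathcal{R}$, it suffices to fix an arbitrary element $l \in \mathcal{R}$ and show that it lies in the left-hand side if and only if it lies in the right-hand side; reading the resulting chain in both directions then yields the two inclusions simultaneously.

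Concretely, I would begin by observing that $l \in \left(\bigcap_{i=1}^{n} W_i\right) : s$ means precisely $sl \in \bigcap_{i=1}^{n} W_i$. By the definition of intersection, this is equivalent to $sl \in W_i$ for every $i \in \{1, 2, \ldots, n\}$. For each fixed $i$, the condition $sl \in W_i$ is exactly the statement $l \in W_i : s$. Hence $l$ belongs to $W_i : s$ for all $i$, which is the same as $l \in \bigcap_{i=1}^{n}(W_i : s)$. This completes the equivalence and therefore the equality.

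For completeness I would then record that every object in the statement is a genuine \( \textit{approx.}\) ideal: each $W_i : s$ is an \( \textit{approx.}\) ideal by the preceding Proposition, the finite intersection $\bigcap_{i=1}^{n} W_i$ is an \( \textit{approx.}\) ideal, and hence $\left(\bigcap_{i=1}^{n} W_i\right) : s$ is again an \( \textit{approx.}\) ideal by that same Proposition. This confirms that the identity holds at the level of \( \textit{approx.}\) ideals and not merely of underlying sets.

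The core argument is purely formal, so I do not anticipate a substantive obstacle in the central equivalence. The only point requiring care is that all the quotient operations and the intersection stay inside the \( \textit{approx.}\)-ideal framework, i.e. that the relevant closure conditions phrased through the upper approximation $\Phi^{*}$ are inherited; but this is dispatched by invoking the earlier Proposition rather than by any new computation.
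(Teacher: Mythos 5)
Your proposal is correct and follows essentially the same route as the paper: both arguments simply unwind the definition of $W:s$ and of intersection, the only cosmetic difference being that you present it as a single chain of equivalences while the paper writes out the two inclusions separately. The additional remark that each side is an \(\textit{approx.}\) ideal (via the preceding Proposition) is a harmless supplement not required by the statement.
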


\begin{proof}
 Let $k \in \bigcap_{i=1}^{n} (W_i :s)$, then $sk \in W_i$, $\forall i \in \{1,2,...,n\}$, which means that $sk \in \bigcap_{i=1}^{n} W_i$. Hence, $k \in (\bigcap_{i=1}^{n} W_i):s$.\newline
 
 On the other hand, let $k \in (\bigcap_{i=1}^{n} W_i):s$, thus $sk \in (\bigcap_{i=1}^{n} W_i)$. It follows that $sk \in W_i$, $\forall i \in \{1,2,...,n\}$. So, $k \in \bigcap_{i=1}^{n} (W_i :s)$.\newline
\end{proof}

\subsection{Approximately semi-primary ideals} 
\leavevmode\vspace{1em}

Another type of \(\textit{approx.}\) ideal is the \(\textit{approx.}\) semi-primary ideal as defined below:

\begin{definition}    
An \(\textit{approx.}\) ideal \( O \) of an \(\textit{approx.}\) ring \( \mathcal{R} \) is called an \(\textit{approx.}\) semi-primary ideal, provided that its radical \( r(O) \) is \(\textit{approx.}\) prime.
\end{definition}
\begin{proposition}
An \(\textit{approx.}\) ring ${\mathcal{R}}$ is said to satisfy the following property: every \(\textit{approx.}\) semi-primary ideal $O$ is an \(\textit{approx.}\) primary ideal. Thus, the following statements hold: \newline
\begin{enumerate}
\item[1.] for any \(\textit{approx.}\) proper ideal $O$ of ${\mathcal{R}}$, the quotient ring $\mathcal{R}/O$ satisfies the given property.\newline

\item[2.] Given that $Q$ and $W$ are \(\textit{approx.}\) ideals of ${\mathcal{R}}$ with $Q \subseteq W \subseteq r(Q)$, and if $Q$ is an \(\textit{approx.}\) r(Q)-primary ideal, then $W$ is also an \(\textit{approx.}\) r(Q)-primary ideal. 
\end{enumerate}
\end{proposition}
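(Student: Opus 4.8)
The plan is to handle the two statements separately, in each case reducing to the hypothesis that every \(\textit{approx.}\) semi-primary ideal of $\mathcal{R}$ is \(\textit{approx.}\) primary by exhibiting the relevant ideal as \(\textit{approx.}\) semi-primary, i.e. as an \(\textit{approx.}\) ideal whose radical is \(\textit{approx.}\) prime. The two workhorses throughout are the monotonicity and idempotence of the \(\textit{approx.}\) radical: if $Q \subseteq W$ then $r(Q) \subseteq r(W)$, and $r(r(Q)) = r(Q)$. Both follow directly from the definition $r(W)=\{s\in\mathcal{R} : s^n\in W,\ n\in N\}$ together with the earlier theorem that $r(W)$ is an \(\textit{approx.}\) radical ideal; indeed $s\in r(Q)$ gives $s^n\in Q\subseteq W$, and $s\in r(r(Q))$ gives $s^{nm}\in Q$, so no descriptive upper approximation enters these two facts.

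For statement~(2) I would first compute $r(W)$. Applying monotonicity of the radical to the chain $Q \subseteq W \subseteq r(Q)$ yields $r(Q) \subseteq r(W) \subseteq r(r(Q)) = r(Q)$, so $r(W) = r(Q)$. Because $Q$ is \(\textit{approx.}\) $r(Q)$-primary, Definition~\ref{p-primary} forces $r(Q)$ to be \(\textit{approx.}\) prime; hence $r(W) = r(Q)$ is \(\textit{approx.}\) prime and $W$ is, by definition, \(\textit{approx.}\) semi-primary. The property assumed of $\mathcal{R}$ then upgrades $W$ to an \(\textit{approx.}\) primary ideal, and since $r(W)=r(Q)$ we conclude that $W$ is \(\textit{approx.}\) $r(Q)$-primary, as required.

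For statement~(1) the strategy is to transport \(\textit{approx.}\) semi-primary and \(\textit{approx.}\) primary ideals across the quotient $\mathcal{R}/O$ using the \(\textit{approx.}\) ideal correspondence, so that the \(\textit{approx.}\) ideals of $\mathcal{R}/O$ are exactly those of the form $I/O$ with $O \subseteq I$. I would take an arbitrary \(\textit{approx.}\) semi-primary ideal of $\mathcal{R}/O$, write it as $I/O$, and verify the two compatibility facts $r(I/O)=r(I)/O$ and that $I/O$ is \(\textit{approx.}\) prime (respectively primary) if and only if $I$ is. Granting these, semi-primariness of $I/O$ gives that $r(I)/O=r(I/O)$ is \(\textit{approx.}\) prime, hence $r(I)$ is \(\textit{approx.}\) prime, so $I$ is \(\textit{approx.}\) semi-primary in $\mathcal{R}$; the assumed property makes $I$ \(\textit{approx.}\) primary, and transporting back shows $I/O$ is \(\textit{approx.}\) primary. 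Thus $\mathcal{R}/O$ inherits the property.

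The hard part will be the careful handling of the descriptive upper approximation $\Phi^{*}$ when passing to the quotient in statement~(1): one must check that the correspondence $I \leftrightarrow I/O$ and the identity $r(I/O)=r(I)/O$ remain valid when sums, products, and powers are only required to land in $\Phi^{*}$ of the relevant set rather than in the set itself, and that the \(\textit{approx.}\) prime and primary conditions—phrased via $ab \in \Phi^{*}W$—are preserved in both directions of the correspondence. Once these $\Phi^{*}$-compatibility checks are in place, both statements reduce to the monotonicity and idempotence of the radical and to the defining property of $\mathcal{R}$, and the remaining arguments are routine.
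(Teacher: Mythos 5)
Your proposal is correct and follows essentially the same route as the paper: part (2) by showing $r(W)=r(Q)$ is \(\textit{approx.}\) prime so that $W$ is \(\textit{approx.}\) semi-primary and hence primary by the assumed property, and part (1) by passing an \(\textit{approx.}\) semi-primary ideal $I/O$ of $\mathcal{R}/O$ down to $I$ in $\mathcal{R}$ and back. In fact you are somewhat more careful than the paper, which simply asserts $r(W)=r(Q)$ and asserts that $I/O$ semi-primary implies $I$ semi-primary, whereas you identify exactly the radical idempotence and the $\Phi^{*}$-compatible ideal correspondence ($r(I/O)=r(I)/O$, prime/primary transported both ways) that these assertions require.
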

\newpage
\begin{proof}$\mbox{}$
\begin{enumerate}
\item[1.] Let $I \subseteq \mathcal{R}$ be an \(\textit{approx.}\) proper ideal of ${\mathcal{R}}$. Thus, $I/O$ is an \(\textit{approx.}\) proper ideal of $\mathcal{R}/O$. Assume that $I/O$ is an \(\textit{approx.}\) semi-primary ideal, I have to show that $I/O$ is an \(\textit{approx.}\) primary ideal.\newline

Let $(x+O),(y+O) \in \mathcal{R}/O$ such that $(x+O)(y+O)=(xy+O) \in \Phi^{*}(I)/O$, provided $xy \in \Phi^{*}I$. Since $I/O$ is an \(\textit{approx.}\) semi-primary ideal, then I can conclude that $I$ is an \(\textit{approx.}\) semi-primary ideal. However, ${\mathcal{R}}$ satisfies the given property, so  $I$ is an \(\textit{approx.}\) primary ideal. Thus, either $x \in I$ or for some $n \in N$, $y^n \in I$. I have $x+O \in I/O$ or $y^n+O \in I/O$. Moreover, $I/O$ is an \(\textit{approx.}\) primary ideal.\newline

\item[2.] Since $Q \subseteq W$, then $r(Q) \subseteq r(W)$. Moreover, $r(W)=r(Q)$. I have $Q$ is an \(\textit{approx.}\) $r(Q)$-primary ideal, provided $Q$ is an \(\textit{approx.}\) primary ideal and $r(Q)$ is an \(\textit{approx.}\) prime ideal. Thus, $Q$ is an \(\textit{approx.}\) semi-primary ideal. It follows that $r(W)$ is an \(\textit{approx.}\) prime ideal. So, I can say that $W$ is an \(\textit{approx.}\) primary  ideal. Furthermore, $W$ is an \(\textit{approx.}\)  $r(Q)$ primary ideal.  
\end{enumerate}
\end{proof}
		
\subsection{Approximately 1-absorbing primary ideals}		
\begin{definition}

Suppose that $Q$ is an \(\textit{approx.}\) ideal of an \(\textit{approx.}\)  ring ${\mathcal{R}}$. Then, $Q$ is called an  \(\textit{approx.}\) 1-absorbing primary ideal of ${\mathcal{R}}$, provided for all non-unit elements $a,b,c \in \mathcal{R}$ and $abc \in \Phi^{*}Q$, implies $ab \in Q$ or $c \in r(Q)$.
\end{definition}
		
The relation between an \(\textit{approx.}\) primary ideal and an \(\textit{approx.}\) 1- absorbing primary ideal is given by the following theorem. 

\begin{theorem}
If $Q$ is an \(\textit{approx.}\) primary ideal of ${\mathcal{R}}$, then $Q$ is an \(\textit{approx.}\) 1- absorbing primary ideal of ${\mathcal{R}}$. 
\end{theorem}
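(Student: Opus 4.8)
The plan is to reduce the $1$-absorbing primary condition directly to the primary condition by regrouping the triple product. Given non-unit elements $a,b,c \in \mathcal{R}$ with $abc \in \Phi^{*}Q$, I would write $abc = (ab)\cdot c$ and view this as a product of two factors, the first being $ab$ and the second being $c$. The goal is then to apply the defining property of an approx. primary ideal (Definition~\ref{approx.primary}) to this two-factor product, so that the whole statement collapses to a short case split.

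First I would invoke the primary hypothesis on the pair $(ab,\, c)$: since $(ab)c = abc \in \Phi^{*}Q$, the primary condition yields two cases, namely $ab \in Q$, or $c^{n} \in Q$ for some $n \geq 1$. In the first case the conclusion $ab \in Q$ is exactly the first alternative in the definition of an approx. $1$-absorbing primary ideal, so nothing further is needed. In the second case, $c^{n} \in Q$ for some $n \geq 1$ means precisely that $c \in r(Q)$ by the definition of the approx. radical, which is the second alternative. Hence in either case the $1$-absorbing primary condition holds, and the proof is complete.

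The main point requiring care is that the primary definition is phrased for factors lying in $\mathcal{R}$, whereas the product $ab$ a priori only lands in $\Phi^{*}\mathcal{R}$ by $({\bf AR}_2)$. The delicate step is therefore to justify that the primary absorbing property may legitimately be applied to the grouped factor $ab$ together with $c$; once this is granted, the remainder is immediate. I would make explicit at the outset that $ab$ is being treated as the first factor of the product $(ab)c \in \Phi^{*}Q$, and I would remark that the non-unit hypotheses on $a,b,c$ serve only to place us inside the hypothesis of the $1$-absorbing definition and play no further role in the deduction. This keeps the argument honest about where the two definitions interface, which is the only nontrivial aspect of an otherwise one-line reduction.
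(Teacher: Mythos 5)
Your proposal is correct and takes essentially the same route as the paper's own proof, which likewise sets $h = ab$ and applies the approx.\ primary condition to the pair $(h,c)$, concluding $ab \in Q$ or $c \in r(Q)$. The subtlety you flag --- that $ab$ a priori lies only in $\Phi^{*}\mathcal{R}$ rather than in $\mathcal{R}$, so applying the primary definition to the grouped factor needs justification --- is likewise left unaddressed in the paper's argument.
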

		
\begin{proof}
Let $a,b,c$ be non-unit elements of ${\mathcal{R}}$ such that $abc \in \Phi^{*}Q$. Assume that $ab = h$, then $hc \in \Phi^{*}(Q)$. However, $Q$ is an \(\textit{approx.}\) primary ideal. So, either $h \in Q$ or $c^n \in Q$ for some $n \in N$. Thus, either $ab \in Q$ or $c \in r(Q)$. Hence, I can conclude that $Q$ is an \(\textit{approx.}\) 1-  absorbing primary ideal. 
\end{proof}

\begin{theorem}
Suppose that $Q$ is an \(\textit{approx.}\) 1-  absorbing primary ideal of an \(\textit{approx.}\) ring ${\mathcal{R}}$. Then, $r(Q)$ is an \(\textit{approx.}\) prime ideal of ${\mathcal{R}}$.  
\end{theorem}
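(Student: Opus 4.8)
The plan is to show that $r(Q)$ satisfies the defining property of an \textit{approx.} prime ideal: for $a,b \in \mathcal{R}$ with $ab \in \Phi^{*} r(Q)$ one must have $a \in r(Q)$ or $b \in r(Q)$. The argument adapts the classical fact (in the spirit of \cite{Badawi2020}) that the radical of a $1$-absorbing primary ideal is prime, the essential idea being to split a suitable power of $ab$ into a product of \emph{three} non-unit factors, so that the $1$-absorbing primary hypothesis can be invoked. Throughout I assume $Q$ is proper, so that units never belong to $r(Q)$.

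First I would fix $a,b \in \mathcal{R}$ with $ab \in \Phi^{*} r(Q)$ and argue, exactly as in the first half of the proof of Proposition~\ref{smallest}, that there is a positive integer $n$ with $(ab)^{n} = a^{n} b^{n} \in \Phi^{*} Q$. After replacing $n$ by a larger exponent if necessary, I may assume $n \geq 2$; this is the technical point that later guarantees enough non-unit factors. Aiming at the prime condition, I then suppose $a \notin r(Q)$ and reduce the goal to deducing $b \in r(Q)$.

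The core step treats the case in which $a$ and $b$ are both non-units. Since $n \geq 2$, I decompose $a^{n} b^{n} = a \cdot a^{n-1} \cdot b^{n}$, where each of $a$, $a^{n-1}$ and $b^{n}$ is a non-unit and the product lies in $\Phi^{*} Q$. Applying the \textit{approx.} $1$-absorbing primary property to this triple yields $a \cdot a^{n-1} = a^{n} \in Q$ or $b^{n} \in r(Q)$. The first alternative forces $a \in r(Q)$, contradicting $a \notin r(Q)$; hence $b^{n} \in r(Q)$, so $(b^{n})^{\ell} = b^{n\ell} \in Q$ for some $\ell$, and therefore $b \in r(Q)$, as required. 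The remaining unit cases are quick: if $a$ is a unit, multiplying $a^{n} b^{n} \in \Phi^{*} Q$ by $a^{-n}$ gives $b^{n} \in \Phi^{*} Q$ and hence $b \in r(Q)$; and if $b$ is a unit while $a$ is not, the same manipulation gives $a^{n} \in \Phi^{*} Q$, i.e. $a \in r(Q)$, against the hypothesis, so this subcase cannot occur.

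I expect the main obstacle to be the bookkeeping with the upper approximation $\Phi^{*}$ rather than the algebra. Two points need care. First, the $1$-absorbing primary definition is stated only for \emph{non-unit} triples, so the reduction to three non-unit factors — and the resulting requirement $n \geq 2$ together with the separate treatment of units — is essential and must be justified cleanly. Second, passing between membership in $\Phi^{*} Q$ and membership in $Q$, both when I multiply the relation $a^{n} b^{n} \in \Phi^{*} Q$ by inverses or ring elements and when I translate $b^{n} \in r(Q)$ into $b \in r(Q)$, relies on the \textit{approx.} ideal axioms for $Q$ and on the convention, already used in Proposition~\ref{smallest}, that these closure properties are read inside $\Phi^{*} Q$; making each such step legitimate is where the descriptive setting demands the most attention.
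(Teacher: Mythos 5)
Your proof is correct at the paper's level of rigor and follows the same overall strategy as the paper's own proof: pass from $ab \in \Phi^{*}r(Q)$ to $(ab)^{n}=a^{n}b^{n}\in\Phi^{*}Q$, split that power into three non-unit factors, and invoke the 1-absorbing primary hypothesis. The difference is in the decomposition: the paper writes $a^{n}b^{n}=a^{m}\cdot a^{m}\cdot b^{2m}$, and so must split into an even case $n=2m$ with the odd case ``following similarly,'' whereas your factorization $a\cdot a^{n-1}\cdot b^{n}$ handles every $n\geq 2$ uniformly, which is tidier. Both arguments are delicate at $n=1$: the paper is silent about it, and you patch it by enlarging the exponent, but note that in this setting that step is not free --- the \(\textit{approx.}\) ideal axioms give $r\cdot a\in\Phi^{*}Q$ only for $a\in Q$, not for $a\in\Phi^{*}Q$, so $(ab)^{n}\in\Phi^{*}Q$ does not automatically yield $(ab)^{n+1}\in\Phi^{*}Q$. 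You also treat the cases where $a$ or $b$ is a unit, which the paper sidesteps by taking $a,b$ non-unit from the outset; be aware, though, that your manipulation there only places $b^{n}$ (resp.\ $a^{n}$) in $\Phi^{*}Q$, while the definition of $r(Q)$ used in the paper requires a power to lie in $Q$ itself, so that subcase as written does not quite certify membership in $r(Q)$. These caveats are of the same character as liberties the paper itself takes (its opening step, borrowed from Proposition~\ref{smallest}, is of the same kind), so your route is acceptable by the paper's standards and in the non-unit core case is arguably cleaner.
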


\begin{proof} 
Let $a,b$ be non-unit elements of ${\mathcal{R}}$ such that $ab \in \Phi^{*}(r(Q))$. Then for some $n \in N$, it follows that $(ab)^n = a^n.b^n$ is descriptively near to $Q$. It is sufficient to consider the case where $n$ is even, and the odd case follows similarly. Set $n = 2m$ for some $m \in N$. Hence, $a^m.a^m.b^{2m} \in \Phi^{*}Q$. However, $Q$ is an \(\textit{approx.}\) 1-  absorbing primary ideal. It follows that, either $a^m . a^m \in Q$ or $b^{nk} \in Q$, for some $k \in N$. Hence, $a \in r(Q)$ or $b \in r(Q)$. Furthermore, $r(Q)$ is an \(\textit{approx.}\)  prime ideal. 
\end{proof}

\begin{definition} \cite{maram_first_article}
 If $x$ is an element of an \(\textit{approx.}\) commutative ring ${\mathcal{R}}$, then $x$ is called an \(\textit{approx.}\) irreducible element, provided $x$ is a non-unit element and if $x= y \cdot z$, where $yz \in \Phi^{*} \mathcal{R}$, implies that either $y$ or $z$ is a unit element.
\end{definition}

\begin{theorem}
Suppose that $W$ is an \(\textit{approx.}\) 1- absorbing primary ideal of ${\mathcal{R}}$ that is not a primary ideal of ${\mathcal{R}}$. Then, 
if $ab \in \Phi^{*}W$ for some non-unit elements $a,b \in \mathcal{R}$ such that neither $a \in W$ nor $b \in r(W)$, then $a$ is an \(\textit{approx.}\) irreducible element of ${\mathcal{R}}$.
\end{theorem}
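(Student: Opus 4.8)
The plan is to verify the two defining conditions of an \(\textit{approx.}\) irreducible element directly. The first condition, that $a$ is a non-unit, is already part of the hypothesis, so nothing is required there. For the second condition I would argue by contradiction: suppose $a = y \cdot z$ with $yz \in \Phi^{*}\mathcal{R}$, and suppose, contrary to irreducibility, that \emph{both} $y$ and $z$ are non-unit elements of $\mathcal{R}$.

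Under this assumption I would substitute the factorization into the given relation $ab \in \Phi^{*}W$. Since $a = yz$, multiplying by $b$ yields $yzb = ab \in \Phi^{*}W$, and now $y$, $z$, and $b$ are three non-unit elements of $\mathcal{R}$. At this point the hypothesis that $W$ is an \(\textit{approx.}\) 1-absorbing primary ideal applies to the triple $(y,z,b)$, giving $yz \in W$ or $b \in r(W)$. Because $yz = a$, this is exactly the dichotomy $a \in W$ or $b \in r(W)$, which directly contradicts the standing assumption that neither $a \in W$ nor $b \in r(W)$ holds. Hence the supposition that both $y$ and $z$ are non-units is untenable, so at least one of them must be a unit, and $a$ is therefore \(\textit{approx.}\) irreducible.

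The argument is a short, direct application of the 1-absorbing primary property, so the chief subtleties lie not in the logic but in the bookkeeping. First, I would confirm that the substitution $ab = (yz)b = yzb$ is legitimate inside $\Phi^{*}W$; this rests on associativity of multiplication in $\Phi^{*}\mathcal{R}$ and on $a$ and $yz$ denoting the same element of $\mathcal{R}$, which is admissible under the standing groupoid assumptions on $(\Phi^{*}\mathcal{R},\cdot)$. Second, I would clarify the role of the hypothesis that $W$ is \emph{not} a primary ideal: it is precisely what guarantees that elements $a,b$ with $ab \in \Phi^{*}W$, $a \notin W$, and $b \notin r(W)$ can exist at all, since were $W$ primary the mere presence of such a pair would be impossible. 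Thus the ``not primary'' clause secures the consistency of the hypotheses rather than being invoked in the derivation itself.

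I expect the only genuine obstacle to be a definitional one: ensuring that the factors $y,z$ arising from $a = y\cdot z$ are honest elements of $\mathcal{R}$ and not merely of $\Phi^{*}\mathcal{R}$, so that the 1-absorbing primary condition, which is phrased for non-unit elements of $\mathcal{R}$, legitimately applies to the triple $(y,z,b)$. Once this membership is pinned down, the contradiction closes immediately and the conclusion follows.
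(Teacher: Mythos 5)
Your proof is correct and matches the paper's argument essentially step for step: both assume a factorization $a = yz$ (the paper writes $a = cd$) into non-units, apply the 1-absorbing primary property to the triple to get $a = yz \in W$ or $b \in r(W)$, and derive the contradiction. Your added remark that the ``not primary'' clause merely guarantees the hypotheses are satisfiable, rather than being used in the deduction, is also consistent with how the paper's proof actually proceeds.
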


\begin{proof}
Suppose that $W$ is not an \(\textit{approx.}\)  primary ideal, but it is an \(\textit{approx.}\)  1- absorbing primary ideal. Then, there exist non-unit elements $a,b \in \mathcal{R}$ such that $ab \in \Phi^{*}(W)$, while neither $a \in W$ nor $b \in r(W)$. Assume that $a$ is not an \(\textit{approx.}\) irreducible element. Hence, $a = c.d$ for some non-unit elements $c,d$ of ${\mathcal{R}}$ with $cd \in \Phi^{*}(\mathcal{R})$. As $ab = cdb \in \Phi^{*}(W)$ and $W$ is an \(\textit{approx.}\) 1-absorbing primary ideal. Since $b \notin r(W)$, then $cd = a \in W$, which is a contradiction. Thus, $a$ is an \(\textit{approx.}\)  irreducible element.
\end{proof}

\begin{theorem}
Suppose that $\mathcal{R} = \mathcal{R}_1\times \mathcal{R}_2$, where $\mathcal{R}_1$ and $\mathcal{R}_2$ are two \(\textit{approx.}\) commutative rings with unity and $\Phi^{*} (\mathcal{R}) = \Phi^{*} \mathcal{R}_1 \times \Phi^{*}\mathcal{R}_2$. If $M= N \times \mathcal{R}_2$ is an \(\textit{approx.}\) primary ideal of ${\mathcal{\mathcal{R}}}$, then $N$ is an \(\textit{approx.}\) primary ideal of $\mathcal{R}_1$. 
\end{theorem}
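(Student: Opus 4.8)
The plan is to push the primary condition on $M$ in the product ring down to a primary condition on $N$ in $\mathcal{R}_1$, by lifting elements of $\mathcal{R}_1$ along the first coordinate. Before touching the primary property, I would first record that $N$ is an approx. ideal of $\mathcal{R}_1$: since $M = N \times \mathcal{R}_2$ is an approx. ideal of $\mathcal{R}$ and the ring operations act coordinatewise, projecting each ideal axiom onto the first factor (using $\Phi^{*}\mathcal{R} = \Phi^{*}\mathcal{R}_1 \times \Phi^{*}\mathcal{R}_2$) yields closure of $N$ under $+$, negation, and multiplication by elements of $\mathcal{R}_1$, all inside $\Phi^{*}N$.

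The second preliminary is to establish that the upper approximation respects the product decomposition, namely $\Phi^{*}(N \times \mathcal{R}_2) = \Phi^{*}N \times \Phi^{*}\mathcal{R}_2$. I would deduce this from the hypothesis $\Phi^{*}\mathcal{R} = \Phi^{*}\mathcal{R}_1 \times \Phi^{*}\mathcal{R}_2$ together with the coordinatewise nature of the descriptive nearness $\dnear$ on the product: a point $(x,y)$ satisfies $\{(x,y)\} \dnear (N \times \mathcal{R}_2)$ precisely when $\{x\} \dnear N$ and $\{y\} \dnear \mathcal{R}_2$, so that $\Phi^{*}M$ splits as the asserted product.

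With these two facts in hand, the heart of the argument is a one-line lifting. Given $a,b \in \mathcal{R}_1$ with $ab \in \Phi^{*}N$, I would form $(a, 1_{\mathcal{R}_2})$ and $(b, 1_{\mathcal{R}_2})$ in $\mathcal{R}$ and compute $(a, 1_{\mathcal{R}_2})(b, 1_{\mathcal{R}_2}) = (ab, 1_{\mathcal{R}_2})$. The factorization of $\Phi^{*}$ together with $ab \in \Phi^{*}N$ places this product in $\Phi^{*}M$. Since $M$ is an approx. primary ideal, either $(a, 1_{\mathcal{R}_2}) \in M$, whence $a \in N$, or $(b, 1_{\mathcal{R}_2})^{n} = (b^{n}, 1_{\mathcal{R}_2}) \in M$ for some $n \geq 1$, whence $b^{n} \in N$. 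In either case the primary condition for $N$ is satisfied.

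The step I expect to be the main obstacle is the legitimacy of the lift $(a, 1_{\mathcal{R}_2})$. By $(\mathbf{AR}_{5})$ the unity of $\mathcal{R}_2$ is only guaranteed to lie in $\Phi^{*}\mathcal{R}_2$, not in $\mathcal{R}_2$ itself, so $(a, 1_{\mathcal{R}_2})$ need not be a genuine element of $\mathcal{R} = \mathcal{R}_1 \times \mathcal{R}_2$ to which the primary property may be applied. To circumvent this I would instead pair $a$ and $b$ with a fixed element $s \in \mathcal{R}_2$, so that $(a,s),(b,s)$ lie honestly in $\mathcal{R}$; the product $(ab, s^{2})$ still belongs to $\Phi^{*}M$ since $s^{2} \in \Phi^{*}\mathcal{R}_2$ by $(\mathbf{AH}_{1})$, and the membership $(a,s) \in M$ again detects $a \in N$. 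The delicate point is then to read off the alternative $(b,s)^{n} = (b^{n}, s^{n})$ as $b^{n} \in N$ without the drifting second coordinate $s^{n}$ obstructing membership in $M$; reconciling this closure subtlety with the definition of approx. primary ideal is exactly what separates the approx. argument from its classical counterpart.
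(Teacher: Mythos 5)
Your proposal is correct and follows essentially the same route as the paper: the paper also lifts $a,b$ along the first coordinate with arbitrary elements $i,j \in \mathcal{R}_2$ (your fix of replacing $1_{\mathcal{R}_2}$ by a genuine $s \in \mathcal{R}_2$ is exactly what the paper does), uses $\Phi^{*}M = \Phi^{*}N \times \Phi^{*}\mathcal{R}_2$, and applies the primary property of $M$ to conclude $a \in N$ or $b^{n} \in N$ — you are in fact more explicit than the paper about justifying the product decomposition of $\Phi^{*}$. Your closing worry about the ``drifting'' second coordinate $s^{n}$ is not an actual obstacle: the primary property of $M$ hands you $(b,s)^{n} = (b^{n},s^{n}) \in M = N \times \mathcal{R}_2$ as its conclusion, and projecting that membership onto the first factor immediately gives $b^{n} \in N$, with no need to verify anything about $s^{n}$ beforehand.
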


\begin{proof}
Let $a,b$ be elements of  $\mathcal{R}_1$ such that $ab \in \Phi^{*}N$. Then, for $i,j \in \mathcal{R}_2$ $(a,i).(b,j) \in \Phi^{*}M$ such that $(ab,ij) \in \Phi^{*} N \times \Phi^{*}\mathcal{R}_2$. As $M$ is an \(\textit{approx.}\) primary ideal. It follows that either $(a,i) \in M$ or $(b,j)^{n} \in M$, for some $n \in N$, provided $a \in N$ or $b \in r(N)$. Hence, $N$ is an \(\textit{approx.}\) primary ideal. 
\end{proof}

\bibliographystyle{plain}

\end{document}